\newcommand{\SN}{{\mathrm{SN}}}
\newcommand{\Norm}{{\mathrm{N}}}
\begin{document}
\title{{Seminormalization} package for \emph{Macaulay2}}

\author{Bernard Serbinowski}
\date{\today}							
\address{Department of Computer Science, Vanderbilt University, PMB 351679, 2301 Vanderbilt Place, Nashville, TN 37235, USA}
\email{bserbinowski@gmail.com}

\author{Karl Schwede}
\date{\today}
\address{Department of Mathematics, University of Utah, 155 S 1400 E Room 233, Salt Lake City, UT, 84112}
\email{schwede@math.utah.edu}

\begin{abstract}
This note describes a package for computing seminormalization of rings within Macaulay2.
\end{abstract}

\subjclass[2010]{14C20}

\keywords{Seminormalization, Normalization, Macaulay2}

\thanks{The first named author was supported in part by NSF CAREER Grant DMS \#1252860/1501102 and NSF grant \#1801849.}
\thanks{The second named author was supported in part by NSF CAREER Grant DMS \#1252860/1501102.}
\maketitle

\section{Introduction}

Given a reduced Noetherian ring $R$, between $R$ and its normalization $R^{\Norm}$, there is the seminormalization $R^{\SN}$.  In this paper we discuss an implementation of a seminormalization algorithm within Macaulay2.  The ring $R$ is called \emph{seminormal} if every finite birational extension $R \subseteq S$ that induces a bijection on primes and an isomorphism of residue fields, is in fact an isomorphism see for instance \cite{TraversoPicardGroup,GrecoTraversoSeminormal,LeahyVitulliSeminormalRingsWeaklyNormalVars}.  In particular, a \emph{cusp} is not seminormal, since its normalization map is a bijection on points and an isomorphism of residue fields.

Let us delve a little deeper into non-normal rings, in a way that will help explain the algorithm.  Suppose that $R$ is as above with normalization $R^{\Norm}$.  The conductor $\frc \subseteq R \subseteq R^{\Norm}$ is defined to be $\Ann_R(R^{\Norm} / R)$.  It is an ideal in both $R$ and $R^{\Norm}$ which defines the locus where $R$ is not normal.  In this situation, $R$ is always the pullback of the following diagram:
\begin{equation}
\label{eq.ConductorPullback}
\xymatrix{
R^{\Norm}/\frc & \ar@{_{(}->}[l] R/\frc \\
R^{\Norm} \ar@{->>}[u] & \ar@{.>}[l] \ar@{.>}[u] R
}
\end{equation}
Or in other words
\[
R \cong \{ (x, y + \frc) \in R^{\Norm} \times R/\frc \;|\; x + \frc = y + \frc \}.
\]
Since the pullback of this diagram dualizes to the pushout when taking $\Spec$, we can interpret $\Spec R$ as a quotient of $\Spec R^{\Norm}$ where certain points are identified (or have their residue fields shrunk) and certain tangent spaces are glued or otherwise annihilated (the latter owing to the \emph{scheme structure} of $R^{\Norm}/\frc$).  For additional discussion, see for instance \cite{SchwedeMathOverflowNormalization}.  In view of this construction, a ring is seminormal if its non-normality is due \emph{only} to gluing of points.  In other words, a seminormal ring is one where there is no undue identification of tangent spaces.

This idea leads us to our algorithm for seminormalizing, which is the topic of the next section.

\subsection*{Acknowledgements}  The authors thank Neil Epstein and Claudiu Raicu for stimulating discussions and in particular to Claudiu Raicu for writing and then improving the {\tt PushForward} package \cite{RaicuPushForward} in ways that helped the development of this package.

\section{Structure of the algorithm}

The idea of the algorithm is to perform the pullback from \autoref{eq.ConductorPullback} but instead of modding out by $\frc$, we want to remove unnecessary tangent space identification.  A simple option would be to form the pullback $S$ of the diagram:
\begin{equation}
\label{eq.ConductorPullback}
\xymatrix{
R^{\Norm}/\sqrt{\frc R^{\Norm}} & \ar@{_{(}->}[l] R/\sqrt{\frc R} \\
R^{\Norm} \ar@{->>}[u] & \ar@{.>}[l] \ar@{.>}[u] S
}
\end{equation}
but this is not the seminormalization of $R$ since $R/\sqrt{\frc R}$ could itself have undue gluing of tangent spaces.  An easy way to get around this is to seminormalize $R/\sqrt{\frc R}$ (which has lower dimension than $R$, and so a recursive algorithm can apply), but the seminormalization $(R/\sqrt{\frc R})^{\SN}$ does not necessarily map to $R^{\Norm}/\sqrt{\frc R^{\Norm}}$ (since that is not necessarily seminormal).  We could also seminormalize $R^{\Norm}/\sqrt{\frc R^{\Norm}}$, but this led to implementation difficulties and so we instead form the intersection:
\[
D = (R/\sqrt{\frc R})^{\SN} \bigcap R^{\Norm}/\sqrt{\frc R^{\Norm}}
\]
where the intersection takes place in the total ring of fractions of $R^{\Norm}/\sqrt{\frc R^{\Norm}}$.
Then we pullback the diagram
\[
\xymatrix{
R^{\Norm}/\sqrt{\frc R^{\Norm}}  & \ar@{_{(}->}[l] D\\
R^{\Norm} \ar@{->>}[u] & C \ar@{.>}[u] \ar@{.>}[l]
}
\]
\begin{theorem}
\label{thm.PullbackIsSeminormalization}
The ring $C$ in the above diagram is the seminormalization of $R$.
\end{theorem}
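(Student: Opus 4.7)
The plan is to verify that $C$ satisfies the two properties characterizing the seminormalization of $R$: (a) the inclusion $R \hookrightarrow C$ is subintegral, and (b) $C$ is itself seminormal. By uniqueness of the seminormalization as the maximal subintegral extension of $R$ inside $R^{\Norm}$, these two facts together force $C = R^{\SN}$.

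First I would unpack the pullback and identify $C$ explicitly with the subring $\{x \in R^{\Norm} : \bar{x} \in D\}$ of $R^{\Norm}$, where $\bar{x}$ denotes the image in $R^{\Norm}/\sqrt{\frc R^{\Norm}}$. The containments $R \subseteq C \subseteq R^{\Norm}$ follow because the image of $R$ in $R^{\Norm}/\sqrt{\frc R^{\Norm}}$ lies in $R/\sqrt{\frc R} \subseteq D$. Moreover, $\sqrt{\frc R^{\Norm}}$ sits inside $C$ as the kernel of the projection $C \to D$, and the induced isomorphism $C/\sqrt{\frc R^{\Norm}} \cong D$ reduces the study of $R \hookrightarrow C$ on the non-normal locus to the study of $R/\sqrt{\frc R} \hookrightarrow D$.

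For (a), away from $V(\frc)$ the map $R \hookrightarrow R^{\Norm}$ is already an isomorphism, hence so is $R \hookrightarrow C$. Over $V(\sqrt{\frc R})$, the extension becomes $R/\sqrt{\frc R} \hookrightarrow D$, which is a subextension of the subintegral extension $R/\sqrt{\frc R} \hookrightarrow (R/\sqrt{\frc R})^{\SN}$ (subintegrality being preserved under passage to subextensions, since the bijection on $\Spec$ and the residue-field isomorphisms are inherited by intermediate rings). Combining the two regimes yields (a).

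The main obstacle is (b). My approach uses Traverso's criterion: for $x \in R^{\Norm} = C^{\Norm}$ with $x^2, x^3 \in C$, one aims to conclude $x \in C$. Reducing modulo $\sqrt{\frc R^{\Norm}}$ gives $\bar{x}^2, \bar{x}^3 \in D$, so $\bar{x}$ is integral over $(R/\sqrt{\frc R})^{\SN}$. Applying seminormality of $(R/\sqrt{\frc R})^{\SN}$ should give $\bar{x} \in (R/\sqrt{\frc R})^{\SN}$; combined with $\bar{x} \in R^{\Norm}/\sqrt{\frc R^{\Norm}}$ and the intersection definition of $D$, this yields $\bar{x} \in D$ and hence $x \in C$. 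The technical heart of the argument is to ensure that $\bar{x}$ lies in the total ring of fractions in which the intersection forming $D$ is computed, so that the seminormality criterion for $(R/\sqrt{\frc R})^{\SN}$ is legitimately applicable inside the ambient $R^{\Norm}/\sqrt{\frc R^{\Norm}}$. An alternative route, which bypasses this subtlety, is to show directly that the conductor of $C$ in $R^{\Norm}$ coincides with $\sqrt{\frc R^{\Norm}}$; being radical in both $C$ and $R^{\Norm}$, Traverso's characterization of seminormality would then yield (b) immediately.
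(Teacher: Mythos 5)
Your proposal is correct in outline but takes a genuinely different route from the paper. You characterize $C$ from below and from above: (a) $R \hookrightarrow C$ is subintegral, and (b) $C$ is seminormal, then invoke uniqueness of the maximal subintegral extension inside $R^{\Norm}$. The paper instead never proves (b): using functoriality of seminormalization it produces a map $R^{\SN} \to C$ via the universal property of the pullback (the image of $R^{\SN}$ in $R^{\Norm}/\sqrt{\frc R^{\Norm}}$ lands in $D$), shows this map is finite, birational, bijective on $\Spec$ and an isomorphism on residue fields using Ferrand's description of the pullback (the same pinching picture you use for your step (a)), and then concludes it is an isomorphism directly from the definition of $R^{\SN}$ being seminormal. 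That comparison-with-$R^{\SN}$ trick is what your approach buys back only at the cost of the harder step (b). Concerning the subtlety you flag in (b): it is real but closes with a standard lemma of Swan rather than any analysis of total quotient rings. A seminormal reduced ring $A$ is subintegrally closed in \emph{every} reduced overring $B$: given $x \in B$ with $x^2, x^3 \in A$, put $b = x^2$, $c = x^3$, so $b^3 = c^2$ in $A$; seminormality yields $a \in A$ with $a^2 = b$, $a^3 = c$, and then $(a - x)^3 = 0$ forces $a = x$ since $B$ is reduced. Applying this with $A = (R/\sqrt{\frc R})^{\SN}$ and $B$ the (reduced) compositum of $A$ with $R^{\Norm}/\sqrt{\frc R^{\Norm}}$ inside the total quotient ring where $D$ is formed gives $\bar{x} \in A$, hence $\bar{x} \in D$ and $x \in C$, with no need to verify that $\bar{x}$ is a fraction over $R/\sqrt{\frc R}$. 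One more small point: in step (a), ``combining the two regimes'' is not automatic from the two local statements; it is exactly the Ferrand pinching result (which the paper cites) that guarantees $\Spec C$ is the disjoint union of $\Spec R^{\Norm} \setminus V(\frc)$ and $\Spec D$ with the indicated residue fields, so you should cite it there as well.
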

\begin{proof}
We first notice that there is a diagram:
\[
\xymatrix{
(R^{\Norm}/\sqrt{\frc R^{\Norm}})^{\SN}  & \ar@{_{(}->}[l] (R/\sqrt{\frc R})^{\SN}\\
R^{\Norm} \ar[u] & R^{\SN} \ar@{.>}[u]_{\beta} \ar@{.>}[l]
}
\]
coming from the functoriality of seminormalization.  Since the image of $R^{\SN}$ maps into $(R^{\Norm}/\sqrt{\frc R^{\Norm}})$, we see that $\beta(R^{\SN}) \subseteq D$.  Hence by the universal property of pullback, there is a map $R^{\SN} \to C$.  On the other hand, it follows from \cite{FerrandConducteurEtPincement} (\cf \cite{SchwedeGluing}), that the map $R^{\SN} \to C$ is induces bijection on points of $\Spec$ and induces an isomorphism of residue fields.  Indeed recall that the pullback of a diagram $\{A \to A/I \leftarrow B \}$ replaces the closed subscheme $V(I) \subseteq \Spec A$ with $\Spec B$, residue fields and all.  Since $C \hookrightarrow R^{\Norm}$, we have that $R^{\SN} \to C$ is also birational and so $R^{\SN} \to C$ is an isomorphism since $R^{\SN}$ is seminormal by definition.  This completes the proof.
\end{proof}

\section{Implementation of the algorithm}
We describe the main algorithm first, and then describe the strategy of some of the component functions individually.

\subsection{The main seminormalization algorithm}

As the algorithm is recursive, the first thing we do is check whether or not we are finished. If the Krull dimension of the ring is 0 or if the ring is normal we return the ring unchanged as it has already been seminormalized (note we assume that the ring the function is given is reduced).  This ensures that the process will end since at each step of the induction, the dimension will drop.

Assuming the ring is not already seminormal, then we create a map from the input ring to its normalization:
\[
\phi : R \to R^{\Norm}.
\]
Note, we do not use the core normalization function {\tt integralClosure} as that does not work correctly on non-domains (even if the ring is reduced).  Instead we call a function {\tt betterNormalizationMap} (which eventually calls {\tt integralClosure} on various quotient rings of $R$), see \autoref{sec.BetterNormalization}.

We then compute the conductor of this map, which in previous sections we called $\frc$.  We then take the radical of this ideal in both $R$ and $R^{\Norm}$ and now we can form the following diagram.
\[
\xymatrix{
R^{\Norm}/\sqrt{\frc R^{\Norm}}  & \ar@{_{(}->}[l] R/\sqrt{\frc R}\\
R^{\Norm} \ar@{->>}[u]^{\phi} &
}
\]
At this point, we make our recursive call and {\tt seminormalize} $R/\sqrt{\frc R}$ (constructing a map $\gamma : R/\sqrt{\frc R} \to R/\sqrt{\frc R}^{\SN}$ in the process).  Finally, we need to construct the ring we called $D$ above, $D = (R/\sqrt{\frc R})^{\SN} \cap R^{\Norm}/\sqrt{\frc R^{\Norm}}$.  This is a bit tricky, and is the subject of \autoref{sec.IntersectingRingExtensions} below.  In the meantime, once we have constructed $D$ (and the map $\psi : D \to R^{\Norm}/\sqrt{\frc R^{\Norm}}$), we can pullback the diagram
\[
\xymatrix{
R^{\Norm}/\sqrt{\frc R^{\Norm}}  & \ar@{_{(}->}[l]_-{\psi} D\\
R^{\Norm} \ar@{->>}[u]^{\phi} &C \ar@{.>}[u] \ar@{.>}[l]
}
\]
to obtain $C$, which we have already verified is the seminormalization in \autoref{thm.PullbackIsSeminormalization}.  Note we perform this pullback by using the package {\tt Pullback.m2} \cite{EllingsonSchwedePullback}, which requires the map $\phi$ above to be surjective.

\subsection{Normalization of reduced rings}
\label{sec.BetterNormalization}

As mentioned above, this package includes a function {\tt betterNormalizationMap} which computes the normalization of reduced rings.  The strategy is as follows.

\begin{description}
\item[Step 1]  Compute the minimal primes $\{\frq_i\}$ of $R$.
\item[Step 2]  Compute the normalizations of the $R/\frq_i$.  Note the function {\tt betterNormalizationMap} has an option {\tt Strategy} which is passed to the {\tt integralClosure} calls at this step.
\item[Step 3]  Construct the product of the normalized rings $R^{\Norm} = \prod_i  (R/\frq_i)^{\Norm}$.
\item[Step 4]  Construct the map from $R$ to $R^{\Norm}$.
\end{description}

Step 3, constructing the product of normalized rings, is achieved by calling a function {\tt ringProduct} which computes a product of a list of rings defined over the same base ring (\ie, defined over {\tt QQ}).  This returns the product of rings as well as the list of orthogonal idempotents defining each ring.  It also returns a list of lists showing what variables from our original rings become in the product.  We hope that this functionality of taking products of rings may be useful in other contexts besides computing normalizations.

Step 4 is the most involved.  We first construct various maps $(R/\frq_i)^{\Norm} \to R^{\Norm}$, notice this is not a \emph{real} ring map we want, we are using it to keep track of where variables of the $(R/\frq_i)^{\Norm}$ go.  We then compose with $R \to (R/\frq_i)^{\Norm}$ to obtain various different maps $R \to R^{\Norm}$.  Finally, we sum over all these maps (multiplying by our orthogonal idempotents as appropriate) to obtain our normalization map $R \to R^{\Norm}$.

\subsection{Intersecting the seminormalization and another ring extension}
\label{sec.IntersectingRingExtensions}

At a key point in our algorithm, we have two extensions of $A = R/{\sqrt{\frc R}}$, first the seminormalization $A^{\SN} = (R/{\sqrt{\frc R}})^{\SN}$ and second, the finite extension to $B = R^N/\sqrt{\frc R^N}$.  We need to form an intersection of these two extensions.  We do this by using the function {\tt intersectSeminormalizationAndExtension} which computes exactly this intersection (and a ring map from our base ring to the intersection).

To do this, first we find a reduced ring $O$ containing both of these extensions whose minimal primes are in bijection with the minimal primes of the ring we called $B$.  This is done via the function {\tt findOverring} which essentially tensors the two extension rings together and then drops any unnecessary minimal primes.  Note we do not have to worry about how the the seminormalization embeds into this overring by uniqueness properties of elements of the seminormalization \cite{SwanSeminormality}. Once we have the overring $O$, we form the exact sequence (making liberal use of the {\tt PushFwd} package)
\[
A^{\SN} \oplus B \xrightarrow{(a,b) \mapsto a-b} O \to 0
\]
and computing the kernel $K$.  At this point, $K$ is the desired intersection ring, but Macaulay2 only understands it as a module.  However, we can take the module generators of $K$, map them into $B$, and consider the ring they generate.  This is our desired ring.

\subsection{Variable naming conventions}
\label{sec.NamingConventions}

One issue we ran into when calling a recursive function that produces new rings is that there can be numerous collisions of variable names.  Because of this, we made the following conventions in variable naming internally.

Our internal recursion calls give variables numbers based on recursion depth and where the renaming is occurring. Each time a recursive call occurs, we increment a counter by 2. This allows us to trace where variables were renamed. Odd numbers indicate renaming occurred within {\tt intersectSeminormalizationAndExtension}, while even numbers indicate the renaming happened within the main recursive method. In addition to this, we also add variables named e$N$ where $N$ is replaced by an integer. These variables are used to ensure that the seminormal property is satisfied for the ring.

However, none of this is visible in the outputted ring, as by default \emph{all the variables} of the output ring will have been renamed as ${\tt Yy}_N$ where $N$ varies.  If you do not want to use {\tt Yy}, you may instead supply your own variable name via the {\tt Variable $\Rightarrow$ X} option when calling Seminormalize. Here {\tt X} must be a valid symbol. It is important to note that you cannot use a symbol that overlaps with an existing variable as this will cause errors.  The output does include a map from the original ring to the seminormalization.  See the examples below.

\section{Examples}

The function {\tt seminormalize} returns a list of three things.  First it returns the ring $R^{\SN}$, then it returns the ring map $R \to R^{\SN}$ and finally it returns the ring map $R^{\SN} \to R^{\Norm}$.

We begin by seminormalizing the cusp, in this case the seminormalization is the normalization.

\begin{verbatim}
i1 : loadPackage "Seminormalization"
o1 = Seminormalization
o1 : Package
i2 : R = QQ[x,y]/ideal(y^2-x^3);
i3 : seminormalizedList = (seminormalize(R));
i4 : seminormalizedList#0

                QQ[Yy , Yy , Yy ]
                     0    1    2
o4 = ---------------------------------------
        2                        2
     (Yy  - Yy , Yy Yy  - Yy , Yy  - Yy Yy )
        2     1    1  2     0    1     0  2

o4 : QuotientRing
i5 : prune seminormalizedList#0
o5 = QQ[Yy ]
          2
o5 : PolynomialRing
i6 : seminormalizedList#1
                    QQ[Yy , Yy , Yy ]
                         0    1    2
o6 = map(---------------------------------------,R,{Yy , Yy })
            2                        2                1    0
         (Yy  - Yy , Yy Yy  - Yy , Yy  - Yy Yy )
            2     1    1  2     0    1     0  2

                        QQ[Yy , Yy , Yy ]
                             0    1    2
o6 : RingMap --------------------------------------- <--- R
                2                        2
             (Yy  - Yy , Yy Yy  - Yy , Yy  - Yy Yy )
                2     1    1  2     0    1     0  2
i7 : isSeminormal(R)
o7 = false
\end{verbatim}

Next, we seminormalize four lines through the origin in $\bA^2$.  This should produce something isomorphic to 4 coordinate axes in $\bA^4$, which it does.

\begin{verbatim}
i2 : R = QQ[x,y]/ideal(x*y*(x^2-y^2));
i3 : seminormalizedList = seminormalize(R);
i4 : seminormalizedList#0
                  QQ[Yy , Yy , Yy , Yy ]
                       0    1    2    3
o4 = ------------------------------------------------
     (Yy Yy , Yy Yy , Yy Yy , Yy Yy , Yy Yy , Yy Yy )
        2  3    1  3    0  3    1  2    0  2    0  1
o4 : QuotientRing
\end{verbatim}

The following example of Greco and Traverso is a seminormal ring whose prime spectrum has an irreducible component that is not seminormal \cite{GrecoTraversoSeminormal}.
\begin{verbatim}
i2 : B = ZZ/11[x,y,u,v,e,f];
i3 : I = intersect(ideal(u,v,e-1,f),ideal(x,y,e,f-1));
i4 : A = B/I;
i5 : E = ZZ/11[z1, z2, z3, z4, z5];
i6 : h = map(A, E, {x^3+u, x^2+v, y, u^2-v^3, x*y});
i7 : J = ker h;
i8 : D = E/J;
i9 : isSeminormal(D) --this should be seminormal
o9 = true
i10 : JJ = preimage(h, ideal(sub(f,A)));
i11 : D2 = E/(trim(JJ + J));
i12 : isSeminormal(D2) --this should not be seminormal
o12 = false
\end{verbatim}
Finally, we verify the seminormality of a seminormal ring that is not weakly normal.
\begin{verbatim}
i2 : R = ZZ/2[t, x, y]/ideal(x^2 - t*y^2);
i3 : isSeminormal(R)
o3 = true
\end{verbatim}

\bibliographystyle{skalpha}
\bibliography{MainBib}

\end{document}